\documentclass[10pt,conference]{ieeeconf}
\IEEEoverridecommandlockouts

\usepackage{amsmath,amssymb,amsfonts}
\usepackage{bm}
\usepackage{cite}
\usepackage{graphicx}
\usepackage{booktabs}
\let\labelindent\relax
\usepackage{enumitem}
\usepackage{balance}
\usepackage{xcolor}
\usepackage{amsmath,amssymb,amsfonts}
\usepackage{bm}
\usepackage{cite}
\usepackage{graphicx}
\usepackage{booktabs}
\usepackage{enumitem}
\usepackage{balance}
\usepackage{xcolor}
\newtheorem{theorem}{Theorem}
\newtheorem{proposition}{Proposition}
\newtheorem{remark}{Remark}
\newtheorem{assumption}{Assumption}

\newcommand{\R}{\mathbb{R}}
\newcommand{\E}{\mathbb{E}}
\newcommand{\cA}{\mathcal{A}}
\newcommand{\cC}{\mathcal{C}}
\newcommand{\cS}{\mathcal{S}}
\newcommand{\cN}{\mathcal{N}}

\newcommand{\norm}[1]{\left\lVert #1\right\rVert}
\DeclareMathOperator{\col}{col}
\DeclareMathOperator{\blkdiag}{blkdiag}

\newcommand{\vx}{\mathbf{x}}
\newcommand{\vy}{\mathbf{y}}
\newcommand{\vw}{\mathbf{w}}
\newcommand{\vv}{\mathbf{v}}
\newcommand{\vxi}{\bm{\xi}}

\newcommand{\vc}{\mathbf{c}}
\newcommand{\vd}{\mathbf{d}}

\newcommand{\vz}{\mathbf{z}}

\newcommand{\veta}{\bm{\eta}}
\newcommand{\mA}{\mathbf{A}}

\newcommand{\mH}{\mathbf{H}}
\newcommand{\mR}{\mathbf{R}}
\newcommand{\mW}{\mathbf{W}}
\newcommand{\mS}{\mathbf{S}}
\newcommand{\mK}{\mathbf{K}}

\newcommand{\mM}{\mathbf{M}}

\newcommand{\mF}{\mathbf{F}}
\newcommand{\mG}{\mathbf{G}}

\newcommand{\mI}{\mathbf{I}}
\newcommand{\mPhi}{\mathbf{\Phi}}
\newcommand{\mSigma}{\bm{\Sigma}}

\title{Source-Reliability Weighted Observer Design for Open Client–Server Networks}

\author{Amit Dutta, Marcos M. Vasconcelos  and Olugbenga M. Anubi  
\thanks{A. Dutta, M. M. Vasconcelos and O. M. Anubi are with the Department of Electrical and Computer Engineering, FAMU-FSU College of Engineering, Florida State University, Tallahassee, FL 32306, USA. E-mails:
        {\tt  \{adutta, m.vasconcelos, oanubi\}@fsu.edu}. }%
}

\begin{document}
\maketitle

\begin{abstract}
State estimation in open client–server networks is challenging because the set of active observation sources changes over time, and active sources need not be valid sensors of the latent state. We study a client-server estimation problem in which a fixed but unknown state-consistent sensing class measures the latent state when active, while nuisance sources may arrive in the active set and generate observations from a different signal class. The server observes active sources, but it does not know which agents observe valid state measurements. We propose a source-reliability-weighted observer. The state estimate is a standard fixed-prior weighted least-squares update, but the information assigned to each active observation is determined by a source-level reliability score learned from repeated innovation consistency. Numerical results show that the observer, when all-active sources are considered, has a finite bias, while the proposed observer initially learns source reliability and then tracks the desired latent state after a finite learning period.
\end{abstract}

\section{Introduction}
Networked sensing and control systems increasingly rely on measurements transmitted by sources whose participation changes over time because of mobility, packet drops, intermittent connectivity, or changing client availability \cite{sinopoli2004,schenato2007,ganti2011}. In a \textit{closed} and trusted sensor setup, each received observation is assumed to follow the prescribed measurement model for the latent state. In an \textit{open} client--server network, however, this assumption can fail. A server may receive measurements from sources that are active but not state-consistent: faulty sensors, mismatched environmental probes, malicious clients, or sources observing a different process. The resulting difficulty is not only intermittent observation, but uncertainty about which active source identities should influence the state estimate.

This paper studies the setting illustrated in Fig.~\ref{fig:problem_overview}. A latent state $\vx_t$ evolves according to a known linear model. A fixed but hidden subset $\cC\subset\cN$ of source identities forms the \textit{state-consistent} sensing class. When a source in $\cC$ is active, it provides a noisy measurement of $\vx_t$. A \textit{nuisance} source in $\cS=\cN\setminus\cC$ may also be active, but its observation is not assumed to be generated by the latent state. The server observes source identities and active observations, but it does not know the hidden class label ``state-consistent'' or ``nuisance.'' 
The target is to recover the latent state $\vx_t$, not the average of active observations and not the nuisance process.

\begin{figure}[t]
    \centering
    \includegraphics[width=1\linewidth]{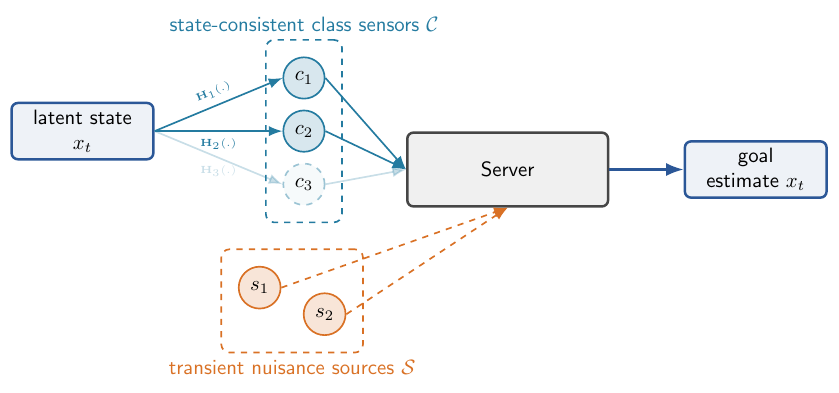}
    \caption{Latent state estimation in an open client--server network. State-consistent sources $\cC$ measure the latent state when active, while nuisance sources $\cS$ transmit observations that are not generated by the latent state model. The server observes only active sources and does not know the hidden source class.}
    \label{fig:problem_overview}
\end{figure}
The estimation layer uses an online prediction--correction least-squares update. At each time, it forms a model-based prediction and then applies a weighted correction using only the currently active observations. Unlike Kalman filtering and related recursive or moving-horizon least-squares methods, it does not solve a batch least-squares problem over a moving finite horizon or maintain the Riccati covariance recursion of a fully trusted linear-Gaussian filtering model.


This modeling choice is driven by the open-network setting, where the main uncertainty is not only measurement noise but also whether an active source is state-consistent or nuisance. The observer therefore uses the model prediction as a stabilizing anchor, corrects it with the currently active measurements, and learns over time which source identities should be trusted in the correction step.


The main contribution is a resilience mechanism for source-class separation under nuisance contamination. If the state-consistent source set $\cC$ were known, an oracle observer that uses only active sources in $\cC$ could track the latent state under the finite-window information and stability conditions developed later. In the open-network setting, however, $\cC$ is unknown. The proposed method therefore aims to recover the behavior of this oracle-valid observer by learning source reliability from repeated prediction residuals and using the learned reliability as a measurement-information weight in the correction step.

\subsection{Related work}
The estimator that we consider in this work is obtained from the same least-squares/Gaussian maximum a posteriori principle that underlies Kalman filtering, information filtering, and recursive least-squares estimation \cite{kalman1960,anderson1979,kailath2000,jazwinski1970,sayed2003}. The distinction is that the covariance matrix is not updated by a Riccati recursion; it is a fixed design matrix that determines how strongly the correction remains anchored to the model prediction.

The work also differs from robust filtering and robust statistics. Classical robust estimators usually reduce the effect of an individual measurement whose current residual is large, through residual reweighting or saturated losses or likelihood assignment \cite{huber1981,masreliez1977, schick1994}. Such methods are often observation-level: a measurement is downweighted because its current residual is large. In contrast, the proposed method assigns trust to source identities. A valid source can produce a large residual because of noise or prediction error, and a nuisance source can produce a small residual by chance. The observer therefore learns a reliability score for each source identity from repeated prediction-residual behavior and uses that score to weight the source's future measurement information.

The problem is also distinct from blind source separation and classical data association. Blind source separation and independent component analysis aim to recover latent components or unmixing transformations from mixtures, typically through independence, non-Gaussianity, or mixing assumptions \cite{comon1994,hyvarinen2000}. Data association in tracking addresses the assignment of anonymous measurements to targets in clutter \cite{barshalom1975,barshalom2009}. Here, the measurements are already indexed by source identity and the measurement matrices are known. The hidden variable is the validity of each source identity.

This work is also related to resilient estimation and learning under faulty or adversarial agents. Attack-resilient state estimation typically asks when the true state can be recovered despite a bounded number of compromised sensors, using conditions such as secure observability, sensor redundancy, or adversary-detection mechanisms \cite{pajic2014,pajic2017,chen2018resilient}. Resilient distributed optimization and learning similarly rely on redundancy across local objectives or gradients to tolerate Byzantine failures or open participation \cite{GuptaVaidya2020,GuptaDoanVaidya2021,LiuGuptaVaidya2023,DuttaDoan2025OpenRedundancy,DuttaDoanReed2023ByzantineFL}. These redundancy-based formulations are powerful but restrictive for open sensing networks: a nuisance source need not be a corrupted copy of a valid source or an adversarial perturbation of the same measurement task; it may correspond to a different signal class altogether. The present paper therefore does not assume source-level redundancy. Instead, it requires that the state-consistent sources, once identified with sufficiently high reliability, provide finite-window observability of the latent state.

\subsection{Contribution}

This paper formulates the problem as source-validity learning for an online reliability-weighted estimator. The problem follows a standard weighted least-squares form; the distinction lies in how measurement information is assigned in an open network where active source identities are observed but their validity is unknown. At each time, the estimator anchors the update at the model-based state prediction, while a reliability variable learns from repeated prediction residuals, which source identities are consistent with the latent-state measurement model. This separates two issues that are often conflated: intermittent participation and source validity. A source may be active only occasionally and still be state-consistent, while another may appear frequently but generate nuisance measurements. Once the learned weights preserve information from valid sources and suppress nuisance leakage, the resulting time-varying error recursion satisfies a post-learning tracking bound. The simulations illustrate this mechanism: the all-active observer remains biased by nuisance signals, whereas the proposed estimator separates the source classes and recovers tracking behavior close to the state-consistent oracle.

\section{Problem Setup and Fixed-Prior Observer Derivation}
\subsection{Open client--server sensing model}
Let $\cN=\{1,\ldots,N\}$ denote the set of source identities observed over the horizon. A fixed but unknown subset $\cC\subset\cN$ contains the state-consistent sensors. The complement $\cS=\cN\setminus\cC$ contains nuisance or non-state-consistent sources. The latent state evolves as
\begin{equation}
    \vx_{t+1}=\mA\vx_t+\vw_t,\qquad \vx_t\in\R^d,
    \label{eq:state}
\end{equation}
where $\mA\in\R^{d\times d}$ is known and $\vw_t$ is the process disturbance. For each source $i\in\cN$, let $s_i(t)\in\{0,1\}$ denote its activity indicator and define
\begin{equation}
    \cA_t=\{i\in\cN:s_i(t)=1\}.
\end{equation}
If state-consistent source $i\in\cC$ is active, the server receives
\begin{equation}
    \vy_i(t)=\mH_i\vx_t+\vv_i(t),\qquad i\in\cC,
    \label{eq:state_consistent_meas}
\end{equation}
where $\vy_i(t)\in\R^{m_i}$, $\mH_i\in\R^{m_i\times d}$ is known, and $\vv_i(t)$ is measurement noise. If nuisance source $j\in\cS$ is active, the server receives
\begin{equation}
    \vy_j(t)=\vxi_j(t),\qquad j\in\cS,
    \label{eq:nuisance_signal}
\end{equation}
where $\vxi_j(t)$ is an unknown nuisance signal. We do not assume that $\vxi_j(t)$ has a known distribution or is generated by the latent state model.

The server knows $\mA$ and the source-indexed matrices $\{\mH_i,\mR_i\}_{i\in\cN}$, where $\mR_i\succ0$ is the nominal covariance or information scaling assigned to a valid measurement from source $i$. It observes $\{(i,\vy_i(t)):i\in\cA_t\}$ but does not know $\cC$. The target is to estimate/track $\vx_t$.


\subsection{Fixed-prior least-squares observer viewpoint}
Before introducing the all-active, oracle, and proposed observers, we define the common prediction--correction layer used by all three. The prediction step is
\begin{equation}
    \hat{\vx}_{t|t-1}=\mA\hat{\vx}_{t-1|t-1}.
    \label{eq:prediction}
\end{equation}

The prediction $\hat{\vx}_{t|t-1}$ is treated as a prior pseudo-measurement of $\vx_t$ through the identity map. The positive definite matrix $\mSigma$ is a fixed prediction-confidence covariance: its inverse is the information assigned to the model prediction. For a valid source $i$, the matrix $\mR_i\succ0$ is the nominal measurement-noise covariance or information scaling associated with the residual $\vy_i(t)-\mH_i\vx$; hence $\mR_i^{-1}$ weights that residual in information form. With these interpretations, the prior and measurement fitting terms are
\begin{equation}
    \norm{\vx-\hat{\vx}_{t|t-1}}_{\mSigma^{-1}}^2,
    \qquad
    \norm{\vy_i(t)-\mH_i\vx}_{\mR_i^{-1}}^2 .
\end{equation}
If the prediction error and valid measurement noise were Gaussian with these fixed covariances, the resulting quadratic correction would coincide with a one-step maximum-a-posteriori/least-squares estimate \cite{kalman1960,jazwinski1970,kailath2000}. In this paper the same quadratic form is used as an online resilient estimation layer, without assuming that every active measurement is valid.

The update is not a finite-horizon least-squares estimator since rather than using measurements over a past time window, our estimator uses only the one-step model prediction and the currently active measurements, not a past measurement window. It is also not a full Kalman filter since the correction uses a fixed prediction-confidence matrix rather than a Riccati covariance recursion \cite{kalman1960,anderson1979,kailath2000}. Although recursive least squares also updates estimates from prior information and measurement residuals \cite{sayed2003}, our setting keeps $\mSigma$ fixed and focuses on source validity: $\mSigma$ regularizes each instantaneous correction, while the learned source weights determine which active identities contribute measurement information.

\subsection{Prediction confidence and one-step anchoring}
The covariance-like matrices $\mSigma$ and $\mR_i$ have different roles. The matrix $\mSigma$ controls trust in the model prediction, while $\mR_i$ controls the nominal scale of source-$i$ measurement noise if that source is state-consistent. A small $\mSigma$ strongly anchors the correction to the prediction, while a large $\mSigma$ makes the update more measurement-responsive. In the scalar one-measurement case with $H=1$, $R=\sigma_v^2$, and $\Sigma=\sigma_x^2$, the one-step correction is
\begin{equation}
    \hat{x}_{t|t}=\hat{x}_{t|t-1}+g(y_t-\hat{x}_{t|t-1}),
    \qquad
    g=\frac{\sigma_x^2}{\sigma_x^2+\sigma_v^2}.
    \label{eq:scalar_prior_example}
\end{equation}
Thus $\sigma_x^2$ directly sets the fraction of the prediction residual injected into the observer. This scalar calculation is only an intuition: in the multidimensional open-network case, the correction direction and strength depend on the active measurement geometry and learned weights. This is why the later simulation checks finite-window observability and product stability rather than relying on scalar intuition.

\subsection{Prediction anchoring and local learning regime}


We assume that the underlying valid-source estimation problem is solvable: if nuisance sources were absent, or if the state-consistent source class were known, the estimator using only valid active measurements would track the latent state under the finite-window information and stability conditions stated below. The purpose of reliability learning is to recover this valid-source behavior when nuisance sources are present and the valid source class is unknown.

\begin{assumption}[Local prediction accuracy during reliability learning]
\label{ass:local_prediction_accuracy}
There exist constants $r_{\rm learn}>0$ and $T_{\rm sep}<\infty$ such that, over the learning interval $0\le t<T_{\rm sep}$, the prediction error satisfies
\begin{equation}
    \norm{\hat{\vx}_{t|t-1}-\vx_t}\le r_{\rm learn}.
    \label{eq:local_prediction_accuracy}
\end{equation}
\end{assumption}

Assumption~\ref{ass:local_prediction_accuracy} is used only to justify the source-separation mechanism. It states that the server starts from an informative estimate, and that the process and measurement corruptions during the learning phase do not drive the predictor so far away that residual-based reliability becomes meaningless. After the source weights have separated, the tracking result below is governed by the post-learning error recursion and its product stability.

\subsection{All-active and oracle observers}
With the notation above, the naive all-active observer is
\begin{equation}
\begin{aligned}
\hat{\vx}_{t|t}^{\rm all}=\arg\min_{\vx}\Big\{&
\norm{\vx-\hat{\vx}_{t|t-1}^{\rm all}}_{\mSigma^{-1}}^2 \\
&+\sum_{i\in\cA_t}\norm{\vy_i(t)-\mH_i\vx}_{\mR_i^{-1}}^2\Big\}.
\end{aligned}
\label{eq:all_active}
\end{equation}
This estimator is well-defined, but it is generally not correct when $\cA_t$ contains nuisance sources because terms of the form $\mH_j^\top\mR_j^{-1}\vxi_j(t)$ enter the normal equations even though $\vxi_j(t)$ is not generated by $\vx_t$.

The unattainable oracle observer uses only active state-consistent sources:
\begin{equation}
\begin{aligned}
\hat{\vx}_{t|t}^{\rm or}=\arg\min_{\vx}\Big\{&
\norm{\vx-\hat{\vx}_{t|t-1}^{\rm or}}_{\mSigma^{-1}}^2 \\
&+\sum_{i\in\cC\cap\cA_t}\norm{\vy_i(t)-\mH_i\vx}_{\mR_i^{-1}}^2\Big\}.
\end{aligned}
\label{eq:oracle_cost}
\end{equation}
The oracle is not implementable because $\cC$ is unknown. It is used as a diagnostic reference for the proposed reliability-weighted observer. In the absence of nuisance sources, or if the state-consistent class were known, this fixed-prior observer reduces to an online prediction--correction estimator driven by valid measurements. Under the finite-window observability and post-learning product-stability conditions stated later, the induced error recursion contracts. The resilience objective is to recover this valid-source observer behavior when nuisance identities are present and unknown.

\begin{remark}[Why all-active estimation is biased]
In the scalar static case, if $n_{\rm c}$ valid measurements satisfy $y_i=x+v_i$ with $\E[v_i]=0$ and $n_{\rm s}$ nuisance measurements have mean $\mu_{\rm s}$, then the all-active average obeys
\begin{equation}
    \E[\hat{x}^{\rm all}-x]
    =\frac{n_{\rm s}}{n_{\rm c}+n_{\rm s}}(\mu_{\rm s}-x).
    \label{eq:mixture_bias}
\end{equation}
The bias is structural: it remains even when the valid measurement noises are zero mean. This motivates identity-level reliability weights instead of treating every active observation as valid.
\end{remark}

\subsection{Source-reliability weighted observer}
The proposed observer attaches a reliability weight $\omega_i(t)\in[0,1]$ to each source. At time $t$, it solves
\begin{equation}
\begin{aligned}
\hat{\vx}_{t|t}=\arg\min_{\vx\in\R^d}\Big\{&
\norm{\vx-\hat{\vx}_{t|t-1}}_{\mSigma^{-1}}^2 \\
&+\sum_{i\in\cA_t}\omega_i(t)
\norm{\vy_i(t)-\mH_i\vx}_{\mR_i^{-1}}^2\Big\}.
\end{aligned}
\label{eq:weighted_cost}
\end{equation}
A small $\omega_i(t)$ inflates the effective covariance of source $i$; if $\omega_i(t)=0$, the observation is ignored. Thus the reliability weight is not an additional residual penalty outside the observer. It is the measurement-information multiplier inside the weighted least-squares correction.

Let $\cA_t=\{i_1,\ldots,i_{a_t}\}$ and $m_t^{\rm a}=\sum_{\ell=1}^{a_t}m_{i_\ell}$. Define
\begin{align}
\vy_t&:=\col\{\vy_{i_1}(t),\ldots,\vy_{i_{a_t}}(t)\}\in\R^{m_t^{\rm a}},
\label{eq:Ystack}\\
\mH_t&:=\col\{\mH_{i_1},\ldots,\mH_{i_{a_t}}\}\in\R^{m_t^{\rm a}\times d},
\label{eq:Hstack}\\
\mW_t^\omega&:=\blkdiag\{\omega_{i_1}(t)\mR_{i_1}^{-1},\ldots,
\omega_{i_{a_t}}(t)\mR_{i_{a_t}}^{-1}\}.
\label{eq:Wstack}
\end{align}
Then \eqref{eq:weighted_cost} becomes
\begin{equation}
\hat{\vx}_{t|t}=\arg\min_{\vx}
\left\{
\norm{\vx-\hat{\vx}_{t|t-1}}_{\mSigma^{-1}}^2
+\norm{\vy_t-\mH_t\vx}_{\mW_t^\omega}^2
\right\}.
\label{eq:stacked_cost}
\end{equation}
If $\cA_t=\emptyset$, the measurement term is absent and the update keeps the prediction.

\begin{proposition}[Closed form, well-posedness, and recursive innovation form]
Let $\mSigma\succ0$, $\mR_i\succ0$, and $\omega_i(t)\ge0$. Define
\begin{align}
\mM_t^\omega
&:=\mH_t^\top \mW_t^\omega\mH_t
=\sum_{i\in\cA_t}\omega_i(t)\mH_i^\top\mR_i^{-1}\mH_i,
\label{eq:Mtomega}\\
\mS_t^\omega
&:=\mSigma^{-1}+\mM_t^\omega,
\label{eq:Stomega}\\
\vc_t^\omega
&:=\mSigma^{-1}\hat{\vx}_{t|t-1}+\mH_t^\top\mW_t^\omega\vy_t,
\label{eq:ctomega}\\
\mK_t^\omega
&:=(\mS_t^\omega)^{-1}\mH_t^\top\mW_t^\omega.
\label{eq:Komega}
\end{align}
Then \eqref{eq:weighted_cost} has the unique minimizer
\begin{equation}
    \hat{\vx}_{t|t}=(\mS_t^\omega)^{-1}\vc_t^\omega.
    \label{eq:closed}
\end{equation}
Moreover, the same update can be written in recursive innovation form as
\begin{equation}
    \hat{\vx}_{t|t}=\hat{\vx}_{t|t-1}+\mK_t^\omega
    \left(\vy_t-\mH_t\hat{\vx}_{t|t-1}\right).
    \label{eq:recursive_update}
\end{equation}
\end{proposition}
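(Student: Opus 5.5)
The plan is to treat \eqref{eq:weighted_cost} in its stacked form \eqref{eq:stacked_cost} as a strictly convex quadratic in $\vx$, read off the minimizer from the first-order condition, and then rearrange the closed form into innovation form.

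\emph{Well-posedness.} Since $\omega_i(t)\ge0$ and $\mR_i\succ0$, each block $\omega_i(t)\mR_i^{-1}$ is positive semidefinite. Hence $\mW_t^\omega\succeq0$ and $\mM_t^\omega=\mH_t^\top\mW_t^\omega\mH_t\succeq0$, which gives the sum form in \eqref{eq:Mtomega} by block multiplication. Because $\mSigma^{-1}\succ0$, we get $\mS_t^\omega=\mSigma^{-1}+\mM_t^\omega\succeq\mSigma^{-1}\succ0$, so $\mS_t^\omega$ is invertible. The case $\cA_t=\emptyset$ fits by taking $\mM_t^\omega=0$, so that $\mS_t^\omega=\mSigma^{-1}$ and the update returns the prediction.

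\emph{Closed form.} Expand the cost as
\begin{equation*}
J(\vx)=\vx^\top\mS_t^\omega\vx-2\vx^\top\vc_t^\omega+\text{const}.
\end{equation*}
This uses the symmetry of $\mSigma^{-1}$ and $\mW_t^\omega$. The Hessian is $2\mS_t^\omega\succ0$, so $J$ is strictly convex and coercive, and it has a unique minimizer. That minimizer solves $\nabla J=2(\mS_t^\omega\vx-\vc_t^\omega)=0$, which gives \eqref{eq:closed}.

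\emph{Innovation form.} Substitute $\mSigma^{-1}=\mS_t^\omega-\mM_t^\omega$ into $\vc_t^\omega$:
\begin{equation*}
(\mS_t^\omega)^{-1}\vc_t^\omega=\hat{\vx}_{t|t-1}-(\mS_t^\omega)^{-1}\mH_t^\top\mW_t^\omega\mH_t\hat{\vx}_{t|t-1}+(\mS_t^\omega)^{-1}\mH_t^\top\mW_t^\omega\vy_t.
\end{equation*}
By the definition \eqref{eq:Komega} of $\mK_t^\omega$, the right-hand side equals $\hat{\vx}_{t|t-1}+\mK_t^\omega(\vy_t-\mH_t\hat{\vx}_{t|t-1})$, which is \eqref{eq:recursive_update}.

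No step is a real obstacle. The only point needing care is that $\mW_t^\omega$ may be singular when some weights vanish. For that reason the argument relies only on $\mSigma^{-1}\succ0$ for invertibility. It also avoids any Woodbury-type gain expression that would require inverting $\mW_t^\omega$ or $\mH_t\mSigma\mH_t^\top+(\mW_t^\omega)^{-1}$.
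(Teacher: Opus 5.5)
Your proof is correct and follows essentially the same route as the paper: you use the first-order condition $\mS_t^\omega\vx=\vc_t^\omega$, with $\mS_t^\omega\succeq\mSigma^{-1}\succ0$ giving uniqueness, and you get the innovation form by writing $\mSigma^{-1}=\mS_t^\omega-\mM_t^\omega$ inside $\vc_t^\omega$, which is exactly the paper's add-and-subtract of $\mH_t^\top\mW_t^\omega\mH_t\hat{\vx}_{t|t-1}$. Your explicit strict-convexity argument and your remark that invertibility never requires $\mW_t^\omega$ to be nonsingular are a slightly more careful version of the same steps.
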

\begin{proof}
The gradient of \eqref{eq:stacked_cost} is
\begin{equation}
2\mSigma^{-1}(\vx-\hat{\vx}_{t|t-1})-2\mH_t^\top\mW_t^\omega(\vy_t-\mH_t\vx).
\end{equation}
Setting it to zero gives $\mS_t^\omega\vx=\vc_t^\omega$. Since $\mSigma^{-1}\succ0$ and $\mW_t^\omega\succeq0$, $\mS_t^\omega\succ0$, so the minimizer is unique and equals \eqref{eq:closed}. Adding and subtracting $\mH_t^\top\mW_t^\omega\mH_t\hat{\vx}_{t|t-1}$ in $\vc_t^\omega$ gives the innovation form \eqref{eq:recursive_update}. 
\end{proof}

\begin{remark}[Well-posedness versus observability]
The condition $\mSigma\succ0$ guarantees that the weighted least-squares problem has a unique minimizer at each time, even when few sources are active or when the instantaneous active measurement matrix is rank deficient. This does not guarantee that the active valid measurements observe the state over time. Per-step well-posedness is a regularization property; convergence is a dynamical observability and stability property.
\end{remark}
We further have the following
\begin{align}\label{eq:IKH_identity}
    \mI-\mK_t^\omega\mH_t
=\mI-(\mS_t^\omega)^{-1}\mM_t^\omega
&=(\mS_t^\omega)^{-1}(\mS_t^\omega-\mM_t^\omega)\notag\\
&=(\mS_t^\omega)^{-1}\mSigma^{-1}.
\end{align}
\subsection{Scalar reliability-weighted form}
For intuition only, take $d=1$, $H_i=1$, $R_i=\sigma_v^2$, and $\Sigma=\sigma_x^2$. If $\Omega_t=\sum_{i\in\cA_t}\omega_i(t)$, then
\begin{equation}
\hat{x}_{t|t}=\hat{x}_{t|t-1}+g_t^\omega(\bar y_t^\omega-\hat{x}_{t|t-1}),
\label{eq:scalar_closed_weighted}
\end{equation}
where $g_t^\omega=\sigma_x^2\Omega_t/(\sigma_v^2+\sigma_x^2\Omega_t)$. Hence both $\Sigma$ and the learned weights control correction strength. The scalar case is relatively simple because every valid measurement provides information about the same one-dimensional state. In the multidimensional case, it is not enough to identify reliable sources; the reliable active measurements must collectively observe every state direction over time.

\section{Source-Reliability Weighted Algorithm}
In the proposed method, the weights $\omega_i(t)$ are not known a priori; they are learned online from repeated prediction-residual evidence associated with each source identity. This is necessary because a single observation is not sufficient to classify a source: a state-consistent sensor can temporarily produce a large prediction residual because of noise or prediction error, while a nuisance source can occasionally appear consistent by chance. Thus, the weighted least-squares observer is the estimation layer, and the reliability-learning mechanism determines how much measurement information each active source contributes over time.

\subsection{Prediction-residual consistency score and residual temperature}
Given the prediction $\hat{\vx}_{t|t-1}$, define the normalized prediction residual for every active source by
\begin{equation}
    r_i(t)=\norm{\vy_i(t)-\mH_i\hat{\vx}_{t|t-1}}_{\mR_i^{-1}}^2.
    \label{eq:innovation}
\end{equation}
This residual is small when the observation is compatible with the predicted state model and large when it is incompatible. We map it to a bounded consistency score
\begin{equation}
    \zeta_i(t)=\phi(r_i(t))\in[0,1],
    \label{eq:score}
\end{equation}
where $\phi$ is nonincreasing. In the simulations we use
\begin{equation}
    \zeta_i(t)=\exp\left(-\frac{r_i(t)}{\tau}\right),
    \label{eq:zeta}
\end{equation}
where $\tau>0$ is a residual temperature. Small $\tau$ makes the test strict, so moderately large residuals receive small scores. Large $\tau$ makes the test tolerant, so scores decay slowly with residual size.

The local prediction condition in Assumption~\ref{ass:local_prediction_accuracy} explains why the residual-derived score can carry source-class information. When the predictor is reasonably close to the true state, a valid source produces a prediction residual dominated by measurement noise and prediction error. A nuisance source produces a prediction residual containing a structural mismatch because its signal is not generated by $\mH_i\vx_t$. Hence the expected consistency score of a valid source can exceed that of a nuisance source.

\subsection{Reliability memory and mean-score interpretation}
Let
\begin{equation}
    N_i(t):=\sum_{k=0}^{t}\mathbf{1}\{i\in\cA_k\}
\end{equation}
be the number of times source $i$ has appeared. A natural empirical-average reliability score is
\begin{equation}
q_i(t)=q_i(t-1)+\frac{1}{N_i(t)}\bigl(\zeta_i(t)-q_i(t-1)\bigr),\qquad i\in\cA_t,
\label{eq:empirical_q}
\end{equation}
with inactive scores held fixed. This update makes $q_i(t)$ a source-level memory of repeated consistency scores. In the simulations we use the constant-gain version
\begin{equation}
q_i(t)=
\begin{cases}
(1-\beta)q_i(t-1)+\beta\zeta_i(t), & i\in\cA_t,\\
q_i(t-1), & i\notin\cA_t,
\end{cases}
\label{eq:q_update}
\end{equation}
where $\beta\in(0,1]$ controls the memory speed.

For interpretation, let $n$ count appearances of one representative source and define $\bar q_{\rm c}(n)$ and $\bar q_{\rm s}(n)$ as the expected scores of representative valid and nuisance sources after $n$ appearances. If
\begin{equation}
\mu_{\rm c}:=\E[\zeta_i\mid i\in\cC],\qquad
\mu_{\rm s}:=\E[\zeta_j\mid j\in\cS],
\label{eq:mean_consistency_scores}
\end{equation}
then the constant-gain recursion gives $\bar q_{\rm c}(n+1)=(1-\beta)\bar q_{\rm c}(n)+\beta\mu_{\rm c}$ and $\bar q_{\rm s}(n+1)=(1-\beta)\bar q_{\rm s}(n)+\beta\mu_{\rm s}$. With equal initialization,
\begin{equation}
\bar q_{\rm c}(n)-\bar q_{\rm s}(n)
=\left(1-(1-\beta)^n\right)(\mu_{\rm c}-\mu_{\rm s}).
\label{eq:mean_score_gap_solution}
\end{equation}
Thus, when $\mu_{\rm c}>\mu_{\rm s}$, repeated residual-score memory creates an expected score gap. The phrase expected prediction-residual consistency refers precisely to these class-conditional expectations of the bounded score $\zeta_i(t)$; it does not introduce an additional signal beyond the residual score. A finite-time sample-path separation theorem is left for future work.

\subsection{Score-to-weight gate and post-learning separation}
The observer weight is obtained by applying a soft gate to the score:
\begin{equation}
    \omega_i(t)=\sigma_\kappa(q_i(t)-\lambda)
    :=\frac{1}{1+\exp[-\kappa(q_i(t)-\lambda)]},
    \label{eq:soft_gate}
\end{equation}
where $\lambda\in(0,1)$ is the transition threshold and $\kappa>0$ is the gate sharpness. The score $q_i(t)$ is the memory variable; the weight $\omega_i(t)$ is the measurement-information multiplier used in the observer.

The reliability-learning mechanism and the observer analysis address two distinct issues: source-class separation and state-estimation convergence. Here, we focus on the latter. Therefore, the convergence analysis is carried out under a post-learning separation condition, which assumes that after some finite time state-consistent sources receive sufficiently large weights and nuisance sources receive sufficiently small weights. A finite-time probabilistic guarantee for this separation is left for future work.

\begin{assumption}[Post-learning reliability separation]
\label{ass:reliability_separation}
There exist $T_{\rm sep}<\infty$, $\underline{\omega}_{\cC}>0$, and $\bar{\omega}_{\cS}\ge0$, with $\bar{\omega}_{\cS}<\underline{\omega}_{\cC}$, such that for all $t\ge T_{\rm sep}$,
\begin{equation}
\omega_i(t)\ge \underline{\omega}_{\cC},\quad i\in\cC\cap\cA_t,
\qquad
\omega_j(t)\le \bar{\omega}_{\cS},\quad j\in\cS\cap\cA_t .
\label{eq:post_learning_weight_separation}
\end{equation}
\end{assumption}
\begin{remark}
Assumption~\ref{ass:reliability_separation} is a post-learning condition, not a statement that the weights separate abruptly at $T_{\rm sep}$. The reliability scores are updated continuously from observed prediction residuals, so the separation may develop gradually. The time $T_{\rm sep}$ denotes the point after which valid active sources are persistently assigned larger weights than nuisance active sources.
\end{remark}

\section{Error Dynamics and Post-Learning Convergence}
This section derives the estimation-error recursion and a post-learning tracking bound. No independent gain design is performed: the correction matrix is induced by the fixed-prior WLS problem, the active set, and the learned source weights.

\subsection{Estimation-error recursion}
Define
\begin{equation}
    \tilde{\vx}_{t|t}:=\hat{\vx}_{t|t}-\vx_t,
    \qquad
    \tilde{\vx}_{t|t-1}:=\hat{\vx}_{t|t-1}-\vx_t .
\end{equation}
Using \eqref{eq:state} and \eqref{eq:prediction},
\begin{align}
\tilde{\vx}_{t|t-1}
&=\mA\hat{\vx}_{t-1|t-1}-(\mA\vx_{t-1}+\vw_{t-1}) \notag\\
&=\mA\tilde{\vx}_{t-1|t-1}-\vw_{t-1}.
\label{eq:prediction_error_revised}
\end{align}
For the stacked active observation, write
\begin{equation}
    \vy_t=\mH_t\vx_t+\vd_t,
    \label{eq:stacked_disturbance_model_revised}
\end{equation}
where $\vd_t$ stacks the valid measurement noises for $i\in\cC\cap\cA_t$ and the nuisance mismatch terms $\vxi_j(t)-\mH_j\vx_t$ for $j\in\cS\cap\cA_t$. Substituting \eqref{eq:stacked_disturbance_model_revised} into \eqref{eq:recursive_update} gives
\begin{align}
\tilde{\vx}_{t|t}
&=\tilde{\vx}_{t|t-1}+\mK_t^\omega(\mH_t\vx_t+\vd_t-\mH_t\hat{\vx}_{t|t-1})\notag\\
&=(\mI-\mK_t^\omega\mH_t)\tilde{\vx}_{t|t-1}+\mK_t^\omega\vd_t.
\end{align}
Combining this with \eqref{eq:prediction_error_revised} yields
\begin{equation}
\tilde{\vx}_{t|t}
=\mF_t^\omega\tilde{\vx}_{t-1|t-1}+\veta_t,
\label{eq:error_recursion_compact_revised}
\end{equation}
where
\begin{align}
\mF_t^\omega&:=(\mI-\mK_t^\omega\mH_t)\mA,
\label{eq:Ft_definition_revised}\\
\veta_t&:=-(\mI-\mK_t^\omega\mH_t)\vw_{t-1}+\mK_t^\omega\vd_t.
\label{eq:error_input_definition_revised}
\end{align}
The matrix $\mF_t^\omega$ is the induced observer error matrix. It depends on the current active set and reliability weights. Therefore, the post-learning stability condition must involve products of these matrices, not the eigenvalues of one fixed matrix.
\label{sec:simulations}
\begin{figure}[t]
\centering
\includegraphics[width=1\linewidth]{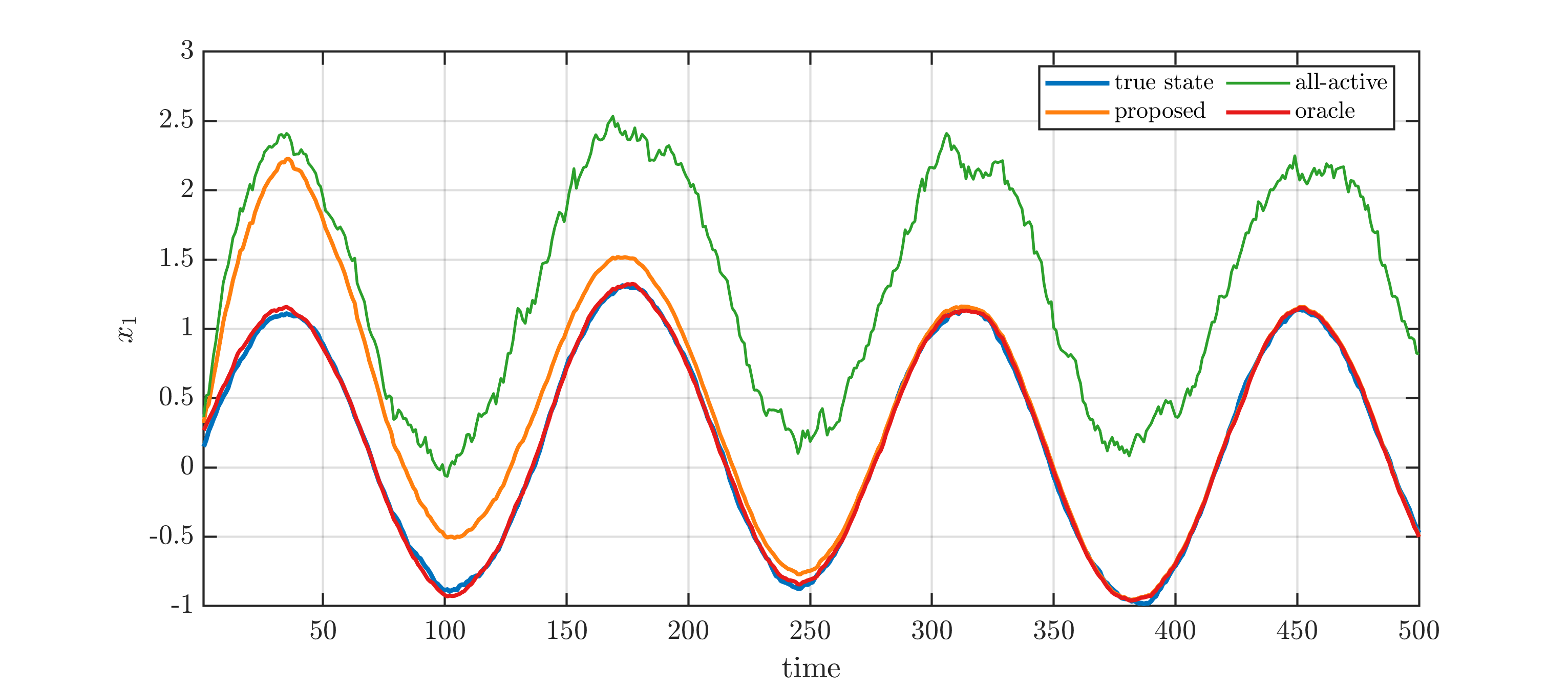}
\caption{State-estimation comparison. The naive all-active observer has a
finite bias because it uses nuisance observations. The proposed observer is
initially imperfect but improves after reliability learning and then tracks the
latent state.}
\label{fig:sim_tracking}
\end{figure}
Using \eqref{eq:IKH_identity}, the same matrix can be written as
\begin{equation}
    \mF_t^\omega=(\mS_t^\omega)^{-1}\mSigma^{-1}\mA
    =(\mI+\mSigma\mM_t^\omega)^{-1}\mA.
    \label{eq:stability_object_revised}
\end{equation}
This identity is useful for interpretation: measurement information in $\mM_t^\omega$ damps the model propagation $\mA$, while missing or weakly trusted measurements leave the observer closer to open-loop prediction.

\subsection{Finite-window state-consistent observability}
The observer can recover the state only if the active valid sources observe all state directions over time. For a window length $L\ge1$, define
\begin{equation}
\mG_{t,L}^{\cC,\omega}
:=
\sum_{\ell=t}^{t+L-1}
(\mA^{\ell-t})^\top
\left(
\sum_{i\in\cC\cap\cA_\ell}
\omega_i(\ell)\mH_i^\top\mR_i^{-1}\mH_i
\right)
\mA^{\ell-t}.
\label{eq:weighted_observability_gramian_revised}
\end{equation}
For any direction $\vz\in\R^d$,
\begin{equation}
\vz^\top\mG_{t,L}^{\cC,\omega}\vz
=\sum_{\ell=t}^{t+L-1}\sum_{i\in\cC\cap\cA_\ell}
\omega_i(\ell)\norm{\mH_i\mA^{\ell-t}\vz}_{\mR_i^{-1}}^2.
\label{eq:gramian_direction_energy_revised}
\end{equation}
Thus $\mG_{t,L}^{\cC,\omega}$ measures the total reliability-weighted valid measurement energy in each state direction over the window. We refer to $\mG_{t,L}^{\cC,\omega}$ as a reliability-weighted finite-window observability Gramian because it is a positive-semidefinite information matrix accumulated from valid measurements over a finite window.

\begin{assumption}[Finite-window state-consistent observability]
\label{ass:finite_window_observability}
There exist $L\ge1$ and $\alpha>0$ such that, for every post-learning window,
\begin{equation}
    \mG_{t,L}^{\cC,\omega}\succeq \alpha\mI.
    \label{eq:finite_window_observability}
\end{equation}
\end{assumption}
\begin{figure*}[t]
\centering
\includegraphics[width=1\linewidth]{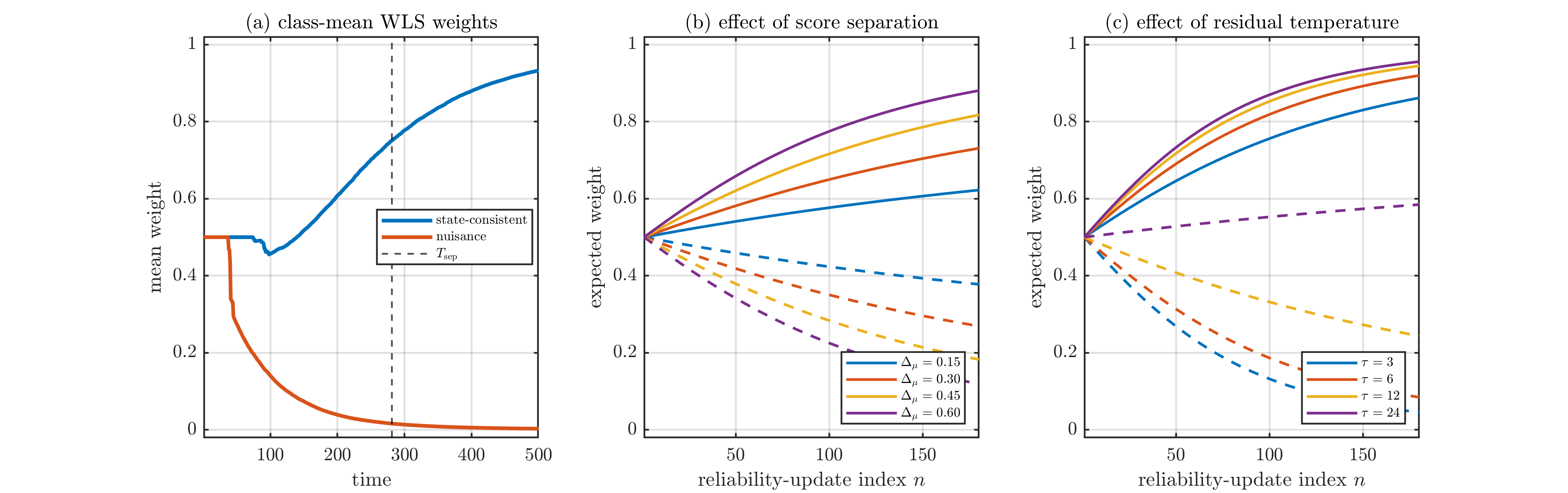}
\caption{Reliability-weight behavior. Panel (a) shows class-mean weights from the full simulation: state-consistent source weights increase while nuisance-source weights decrease. Panels (b)--(c) are mean-score diagnostics indexed by the reliability-update counter $n$, not by global time. Larger score separation $\Delta_\mu=\mu_{\rm c}-\mu_{\rm s}$ gives faster source discrimination, while the residual-temperature parameter $\tau$ controls how residual mismatch is translated into consistency scores. Solid curves denote state-consistent weights and dashed curves denote nuisance weights.}
\label{fig:sim_reliability}
\end{figure*}
Assumption~\ref{ass:finite_window_observability} is a tracking requirement, not a per-step solvability requirement. The matrix $\mSigma^{-1}$ makes each instantaneous WLS problem uniquely solvable, but it does not ensure that the valid measurement stream observes every state direction.

\subsection{Post-learning product stability}
The product term comes directly from rolling out \eqref{eq:error_recursion_compact_revised}. Unlike $\mG_{t,L}^{\cC,\omega}$, the matrix $\mPhi_\omega(t,s)$ defined below is not a Gramian; it is the state-transition product of the time-varying homogeneous error recursion. For $t=s$,
\begin{equation}
\tilde{\vx}_{s|s}=\mF_s^\omega\tilde{\vx}_{s-1|s-1}+\veta_s.
\end{equation}
For $t=s+1$,
\begin{equation}
\tilde{\vx}_{s+1|s+1}=\mF_{s+1}^\omega\mF_s^\omega\tilde{\vx}_{s-1|s-1}
+\mF_{s+1}^\omega\veta_s+\veta_{s+1}.\notag
\end{equation}
For general $t\ge s$, define
\begin{equation}
    \mPhi_\omega(t,s):=\mF_t^\omega\mF_{t-1}^\omega\cdots\mF_s^\omega,
    \label{eq:transition_product_revised}
\end{equation}
and use the empty-product convention $\mPhi_\omega(t,t+1)=\mI$ only to write the current forcing term compactly. The rolled-out recursion is
\begin{equation}
\tilde{\vx}_{t|t}=\mPhi_\omega(t,s)\tilde{\vx}_{s-1|s-1}
+\sum_{\ell=s}^{t}\mPhi_\omega(t,\ell+1)\veta_\ell.
\label{eq:error_rollout_revised}
\end{equation}

\begin{assumption}[Post-learning product stability]
\label{ass:product_stability}
There exist constants $c_\omega\ge1$ and $\rho_\omega\in(0,1)$ such that, for all $t\ge s\ge T_{\rm sep}$,
\begin{equation}
    \norm{\mPhi_\omega(t,s)}_2
    \le c_\omega\rho_\omega^{t-s+1}.
    \label{eq:product_stability_bound}
\end{equation}
\end{assumption}

In the scalar case, this assumption reduces to the decay of products of scalars $F_t^\omega$. In the multidimensional case, it can be checked numerically by computing $\norm{\mPhi_\omega(t,s)}_2$ over post-learning windows. The simulation below performs this check and also verifies Assumption~\ref{ass:finite_window_observability} empirically.

\subsection{Convergence theorem}
\begin{theorem}[Post-learning tracking bound]
\label{thm:post_learning_error_bound}
Suppose Assumptions~\ref{ass:reliability_separation} and~\ref{ass:product_stability} hold. Then, for all $t\ge s\ge T_{\rm sep}$,
\begin{equation}
\norm{\tilde{\vx}_{t|t}}
\le
c_\omega\rho_\omega^{t-s+1}\norm{\tilde{\vx}_{s-1|s-1}}
+c_\omega\sum_{\ell=s}^{t}\rho_\omega^{t-\ell}\norm{\veta_\ell}.
\label{eq:post_learning_error_bound}
\end{equation}
\end{theorem}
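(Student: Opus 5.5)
The bound follows directly from the rolled-out recursion \eqref{eq:error_rollout_revised}, so I will start from that identity rather than build anything new. Fix $t\ge s\ge T_{\rm sep}$. Unrolling the compact recursion \eqref{eq:error_recursion_compact_revised} by induction on $t$ gives
\begin{equation}
\tilde{\vx}_{t|t}=\mPhi_\omega(t,s)\tilde{\vx}_{s-1|s-1}+\sum_{\ell=s}^{t}\mPhi_\omega(t,\ell+1)\veta_\ell .
\notag
\end{equation}
The base case $t=s$ is the one-step recursion with $\mPhi_\omega(s,s)=\mF_s^\omega$ and $\mPhi_\omega(s,s+1)=\mI$. For the inductive step I multiply the identity at time $t$ by $\mF_{t+1}^\omega$ and add $\veta_{t+1}$. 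This uses $\mF_{t+1}^\omega\mPhi_\omega(t,\ell+1)=\mPhi_\omega(t+1,\ell+1)$, together with the empty-product convention for the new last term.

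Next I take Euclidean norms and apply the triangle inequality and submultiplicativity:
\begin{equation}
\norm{\tilde{\vx}_{t|t}}\le\norm{\mPhi_\omega(t,s)}_2\norm{\tilde{\vx}_{s-1|s-1}}+\sum_{\ell=s}^{t}\norm{\mPhi_\omega(t,\ell+1)}_2\norm{\veta_\ell}.
\notag
\end{equation}
The first term is handled directly by Assumption~\ref{ass:product_stability}, since $t\ge s\ge T_{\rm sep}$. That assumption gives $\norm{\mPhi_\omega(t,s)}_2\le c_\omega\rho_\omega^{t-s+1}$.

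For the summands with $\ell\le t-1$, we have $t\ge\ell+1\ge T_{\rm sep}$, so Assumption~\ref{ass:product_stability} applies to the pair $(t,\ell+1)$. It gives $\norm{\mPhi_\omega(t,\ell+1)}_2\le c_\omega\rho_\omega^{t-\ell}$.

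The only point needing care is the boundary term $\ell=t$, where $\mPhi_\omega(t,t+1)=\mI$ lies outside the range covered by the assumption. Here I use $\norm{\mI}_2=1\le c_\omega=c_\omega\rho_\omega^{0}$, which holds because $c_\omega\ge1$ by hypothesis. This matches the exponent $t-\ell=0$ in the claimed sum. Summing these bounds yields \eqref{eq:post_learning_error_bound}.

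Assumption~\ref{ass:reliability_separation} enters only by fixing $T_{\rm sep}$, the start of the post-learning regime on which the product bound is posited. No properties of $\veta_\ell$ are needed, since the bound is stated in terms of $\norm{\veta_\ell}$ itself. I expect no real obstacle beyond these index and convention checks at the endpoints.
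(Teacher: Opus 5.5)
Your proposal is correct and follows the paper's proof essentially step for step: roll out the error recursion, apply the triangle inequality and submultiplicativity, bound each transition product by Assumption~\ref{ass:product_stability}, and handle the $\ell=t$ empty product via $c_\omega\ge1$. Your explicit induction for the rollout and the index check $t\ge\ell+1\ge T_{\rm sep}$ make precise what the paper leaves implicit.
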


\begin{proof}
Rolling out \eqref{eq:error_recursion_compact_revised} from $s$ to $t$ gives \eqref{eq:error_rollout_revised}. Taking norms and using the triangle inequality,
\[
\norm{\tilde{\vx}_{t|t}}
\le \norm{\mPhi_\omega(t,s)}_2\norm{\tilde{\vx}_{s-1|s-1}}
+\sum_{\ell=s}^{t}\norm{\mPhi_\omega(t,\ell+1)}_2\norm{\veta_\ell}.
\]
Assumption~\ref{ass:product_stability} bounds the first product by $c_\omega\rho_\omega^{t-s+1}$ and the product multiplying $\veta_\ell$ by $c_\omega\rho_\omega^{t-\ell}$. For $\ell=t$, the empty product is $\mI$, which is also bounded by $c_\omega\rho_\omega^0$ because $c_\omega\ge1$. Substitution yields \eqref{eq:post_learning_error_bound}.
\end{proof}

\begin{remark}[Implications of the post-learning error bound]
\label{rem:post_learning_error_implications}
Theorem~\ref{thm:post_learning_error_bound} separates the initial post-learning estimation error from the aggregate forcing term $\veta_t$. If $\veta_t\equiv0$ after $T_{\rm sep}$, then $\tilde{\vx}_{t|t}\to0$ exponentially. If $\norm{\veta_t}\le \bar\eta$, then
\begin{equation}
    \limsup_{t\to\infty}\norm{\tilde{\vx}_{t|t}}
    \leq
    \frac{c_\omega}{1-\rho_\omega}\bar\eta .
    \label{eq:ultimate_error_bound}
\end{equation}
Thus, with bounded process noise, measurement noise, or residual nuisance leakage, the observer is ultimately bounded rather than exactly convergent. If $\veta_t\to0$, then the stable convolution term vanishes and $\tilde{\vx}_{t|t}\to0$. In this paper, this conclusion is established under Assumption~\ref{ass:product_stability}, which imposes the post-learning product-stability condition needed for the error recursion. The simulations provide empirical evidence that this condition holds. 

\end{remark}

\begin{remark}[Post-learning separation and future work]
\label{rem:post_learning_future_work}
Assumption~\ref{ass:reliability_separation} isolates the observer-convergence question from the reliability-separation question. After $T_{\rm sep}$, valid active sources have weights at least $\underline{\omega}_{\cC}$, so their measurement information is preserved, while nuisance sources have weights at most $\bar{\omega}_{\cS}$, so their mismatch contribution to $\mK_t^\omega\vd_t$ is suppressed. A finite-time probabilistic theorem proving such separation and quantifying $T_{\rm sep}$ is left for future work.
\end{remark}

\section{Simulations}
\label{sec:simulations}

We simulate a three-dimensional open client--server sensing problem with $15$ source identities. We consider $9$ sources that are state-consistent and $6$ that are nuisance sources. The latent state is generated by
\begin{equation}
\vx_{t+1}=\mA\vx_t+\vw_t ,
\label{eq:sim_dynamics}
\end{equation}
where
\begin{align*}
    \mA=
\begin{bmatrix}
0.998 & 0.008 & 0\\
-0.004 & 0.996 & 0.006\\
0 & -0.005 & 0.995
\end{bmatrix},
\end{align*}
and $\vw_t$ is zero-mean process noise with covariance $0.009^2\mI$. The desired nominal trajectory has three different sinusoidal coordinates, with offsets $(0.10,-0.20,0.18)$, amplitudes $(1.05,0.78,0.93)$, frequencies $(0.045,0.031,0.058)$, and phases $(0,1.10,2.20)$. The state-consistent sources generate measurements according to the latent-state measurement model, with scalar measurement noise variance $0.0925^2$. Their measurement rows are repeated coordinate measurements, so the valid source class observes all three state directions over finite windows. The nuisance sources generate structured oscillatory signals with offsets and phases that are not produced by the latent-state measurement model. Thus, the setup matches the open-network problem: source identities and measurements are observed, but the server does not know which active sources are state-consistent.

We compare three estimators. The all-active estimator assigns unit weight to every active source and therefore treats nuisance measurements as valid. The oracle estimator uses only the active state-consistent sources and serves as a benchmark. The proposed estimator uses the reliability-weighted least-squares correction with fixed prediction-confidence matrix $(0.025)^2\mI$ and learns source weights from repeated prediction residuals. The empirical separation time $T_{\rm sep}$ is defined as the first time after which the mean state-consistent weight remains above the chosen high threshold and the mean nuisance weight remains below the chosen low threshold for all later samples.

Fig. \ref{fig:sim_tracking} plot shows the first representative coordinate. The proposed estimator is initially affected by nuisance contamination, but after the source weights separate, it recovers the latent-state trajectory and tracks close to the oracle.

Fig. \ref{fig:sim_reliability} (a) shows the mean observer weights for the two source classes. We observe that, after the learning transient, state-consistent sources are assigned large correction weights while nuisance sources are assigned small weights, so the observer update increasingly uses measurements generated by the latent-state model and suppresses measurements generated by the nuisance signal class. Fig. \ref{fig:sim_reliability} (b) shows that larger score separation gives faster class discrimination, while \ref{fig:sim_reliability} (c) shows that the residual-temperature parameter controls how strongly residual mismatch affects the learned weights.

Fig \ref{fig:sim_leakage} (a) shows that finite-window observability Gramian remains positive, indicating that the retained valid measurements observe all state directions, thus adhering to Assumption \ref{ass:finite_window_observability}. The post-learning transition product decays as shown in Fig \ref{fig:sim_leakage} (b), illustrating the product-stability condition in Assumption \ref{ass:product_stability}. Finally, the nuisance-leakage plot in Fig \ref{fig:sim_leakage} (c), shows the additive effect of nuisance measurements in the error recursion. This leakage is non-negligible before source separation and drops after nuisance weights are suppressed. 
\begin{figure*}[t]
\centering
\includegraphics[width=1\linewidth]{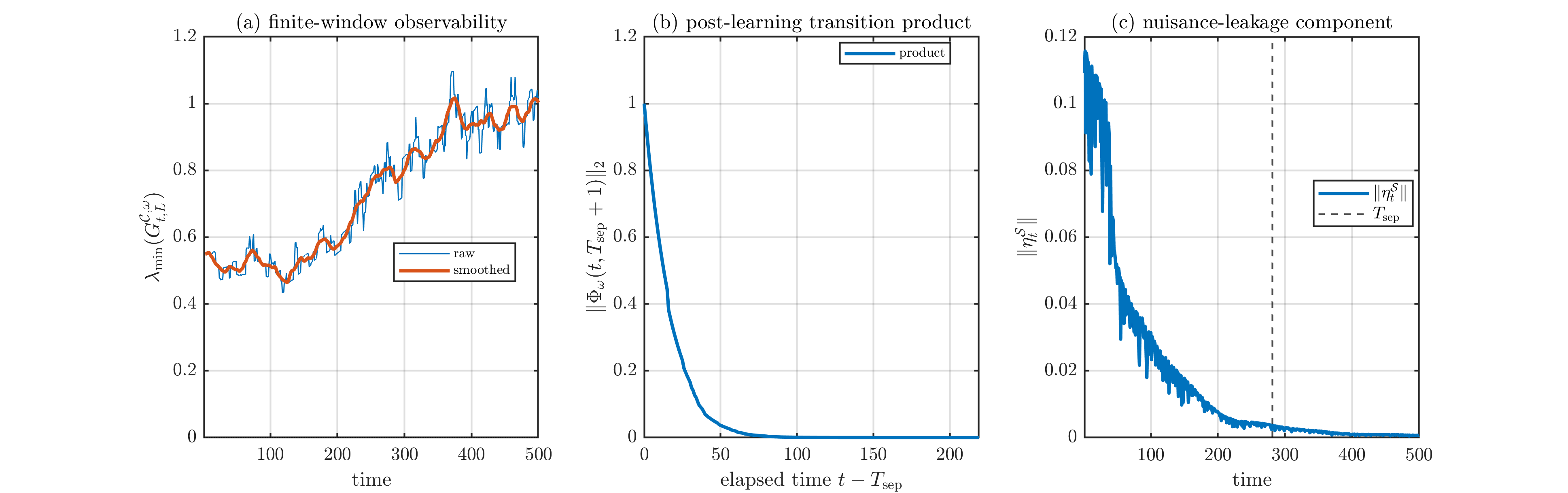}
\caption{Nuisance-leakage component of the error forcing. Before separation, nuisance sources still contribute to the WLS correction and produce a non-negligible additive forcing term. After separation, the learned nuisance weights are small and the leakage drops to a lower noise floor.}
\label{fig:sim_leakage}
\end{figure*}

\section{Conclusion}
We presented a source-reliability weighted observer for state estimation in an open client-server network with nuisance contamination. The observer combines a standard fixed-prior weighted least-squares correction with source-level reliability learning based on innovation consistency. We derived the closed-form update, the recursive innovation form, and the score-to-weight mechanism. Numerical results show the intended behavior: the proposed observer initially suffers from imperfect reliability, then separates state-consistent and nuisance sources, and finally tracks the latent state while the all-active observer remains biased.

\bibliographystyle{IEEEtran}
\bibliography{IEEEfull,myBIB}

\begin{thebibliography}{10}
\providecommand{\url}[1]{#1}
\csname url@samestyle\endcsname
\providecommand{\newblock}{\relax}
\providecommand{\bibinfo}[2]{#2}
\providecommand{\BIBentrySTDinterwordspacing}{\spaceskip=0pt\relax}
\providecommand{\BIBentryALTinterwordstretchfactor}{4}
\providecommand{\BIBentryALTinterwordspacing}{\spaceskip=\fontdimen2\font plus
\BIBentryALTinterwordstretchfactor\fontdimen3\font minus \fontdimen4\font\relax}
\providecommand{\BIBforeignlanguage}[2]{{%
\expandafter\ifx\csname l@#1\endcsname\relax
\typeout{** WARNING: IEEEtran.bst: No hyphenation pattern has been}%
\typeout{** loaded for the language `#1'. Using the pattern for}%
\typeout{** the default language instead.}%
\else
\language=\csname l@#1\endcsname
\fi
#2}}
\providecommand{\BIBdecl}{\relax}
\BIBdecl

\bibitem{sinopoli2004}
B.~Sinopoli, L.~Schenato, M.~Franceschetti, K.~Poolla, M.~I. Jordan, and S.~S. Sastry, ``Kalman filtering with intermittent observations,'' \emph{IEEE Transactions on Automatic Control}, vol.~49, no.~9, pp. 1453--1464, 2004.

\bibitem{schenato2007}
L.~Schenato, B.~Sinopoli, M.~Franceschetti, K.~Poolla, and S.~S. Sastry, ``Foundations of control and estimation over lossy networks,'' \emph{Proceedings of the IEEE}, vol.~95, no.~1, pp. 163--187, 2007.

\bibitem{ganti2011}
R.~K. Ganti, F.~Ye, and H.~Lei, ``Mobile crowdsensing: Current state and future challenges,'' \emph{IEEE Communications Magazine}, vol.~49, no.~11, pp. 32--39, 2011.

\bibitem{kalman1960}
R.~E. Kalman, ``A new approach to linear filtering and prediction problems,'' \emph{Journal of Basic Engineering}, vol.~82, no.~1, pp. 35--45, 1960.

\bibitem{anderson1979}
B.~D.~O. Anderson and J.~B. Moore, \emph{Optimal Filtering}.\hskip 1em plus 0.5em minus 0.4em\relax Englewood Cliffs, NJ, USA: Prentice-Hall, 1979.

\bibitem{kailath2000}
T.~Kailath, A.~H. Sayed, and B.~Hassibi, \emph{Linear Estimation}.\hskip 1em plus 0.5em minus 0.4em\relax Upper Saddle River, NJ, USA: Prentice Hall, 2000.

\bibitem{jazwinski1970}
A.~H. Jazwinski, \emph{Stochastic Processes and Filtering Theory}.\hskip 1em plus 0.5em minus 0.4em\relax Academic Press, 1970.

\bibitem{sayed2003}
A.~H. Sayed, \emph{Fundamentals of Adaptive Filtering}.\hskip 1em plus 0.5em minus 0.4em\relax Wiley, 2003.

\bibitem{huber1981}
P.~J. Huber, \emph{Robust Statistics}.\hskip 1em plus 0.5em minus 0.4em\relax New York, NY, USA: John Wiley \& Sons, 1981.

\bibitem{masreliez1977}
C.~J. Masreliez and R.~D. Martin, ``Robust bayesian estimation for the linear model and robustifying the kalman filter,'' \emph{IEEE Transactions on Automatic Control}, vol.~22, no.~3, pp. 361--371, 1977.

\bibitem{schick1994}
I.~C. Schick and S.~K. Mitter, ``Robust recursive estimation in the presence of heavy-tailed observation noise,'' \emph{The Annals of Statistics}, vol.~22, no.~2, pp. 1045--1080, 1994.

\bibitem{comon1994}
P.~Comon, ``Independent component analysis, a new concept?'' \emph{Signal Processing}, vol.~36, no.~3, pp. 287--314, 1994.

\bibitem{hyvarinen2000}
A.~Hyv{\"a}rinen and E.~Oja, ``Independent component analysis: Algorithms and applications,'' \emph{Neural Networks}, vol.~13, no. 4--5, pp. 411--430, 2000.

\bibitem{barshalom1975}
Y.~Bar-Shalom and E.~Tse, ``Tracking in a cluttered environment with probabilistic data association,'' \emph{Automatica}, vol.~11, no.~5, pp. 451--460, 1975.

\bibitem{barshalom2009}
Y.~Bar-Shalom, F.~Daum, and J.~Huang, ``The probabilistic data association filter,'' \emph{IEEE Control Systems Magazine}, vol.~29, no.~6, pp. 82--100, 2009.

\bibitem{pajic2014}
M.~Pajic, J.~Weimer, N.~Bezzo, P.~Tabuada, O.~Sokolsky, I.~Lee, and G.~J. Pappas, ``Robustness of attack-resilient state estimators,'' in \emph{Proceedings of the ACM/IEEE International Conference on Cyber-Physical Systems}, 2014, pp. 163--174.

\bibitem{pajic2017}
M.~Pajic, I.~Lee, and G.~J. Pappas, ``Attack-resilient state estimation for noisy dynamical systems,'' \emph{IEEE Transactions on Control of Network Systems}, vol.~4, no.~1, pp. 82--92, 2017.

\bibitem{chen2018resilient}
Y.~Chen, S.~Kar, and J.~M.~F. Moura, ``Resilient distributed estimation through adversary detection,'' \emph{IEEE Transactions on Signal Processing}, vol.~66, no.~9, pp. 2455--2469, 2018.

\bibitem{GuptaVaidya2020}
N.~Gupta and N.~H. Vaidya, ``Fault-tolerance in distributed optimization: The case of redundancy,'' in \emph{Proceedings of the ACM Symposium on Principles of Distributed Computing (PODC)}, 2020, pp. 365--374.

\bibitem{GuptaDoanVaidya2021}
N.~Gupta, T.~T. Doan, and N.~H. Vaidya, ``Byzantine fault-tolerance in federated local {SGD} under {$2f$}-redundancy,'' \emph{arXiv preprint arXiv:2108.11769}, 2021.

\bibitem{LiuGuptaVaidya2023}
S.~Liu, N.~Gupta, and N.~H. Vaidya, ``Impact of redundancy on resilience in distributed optimization and learning,'' in \emph{Proceedings of the 24th International Conference on Distributed Computing and Networking (ICDCN)}, 2023, pp. 260--269.

\bibitem{DuttaDoan2025OpenRedundancy}
A.~Dutta and T.~T. Doan, ``Distributed optimization in open networks under redundancy,'' in \emph{Proceedings of the 2025 American Control Conference (ACC)}, 2025, pp. 2185--2190.

\bibitem{DuttaDoanReed2023ByzantineFL}
A.~Dutta, T.~T. Doan, and J.~H. Reed, ``Resilient federated learning under byzantine attack in distributed nonconvex optimization with {$2f$}-redundancy,'' in \emph{Proceedings of the 62nd IEEE Conference on Decision and Control (CDC)}, 2023, pp. 1156--1161.

\end{thebibliography}
\end{document}